\def\tank#1{\protected@xdef\@thanks{\@thanks
 \protect\footnotetext[0]{#1}}}
\def\bigfoot{

 \@footnotetext}
\newcommand{\ea}{\end{array}}
\newtheorem{theorem}{Theorem}[section]
\newtheorem{lemma}{Lemma}[section]
\newtheorem{corollary}[theorem]{Corollary}
\newtheorem{definition}[theorem]{Definition}
\newtheorem{example}[theorem]{Example}
\newtheorem{rem}{Remark}[section]
\def\beq{\begin{equation}}
\def\nneq{\end{equation}}
\def\bthm{\begin{theorem}}
\def\nthm{\end{theorem}}
\def\blem{\begin{lemma}}
\def\nlem{\end{lemma}}
\def\bprf{\begin{proof}}
\def\nprf{\end{proof}}
\def\bprop{\begin{prop}}
\def\nprop{\end{prop}}
\def\brmk{\begin{rem}}
\def\nrmk{\end{rem}}
\def\bexa{\begin{example}}
\def\nexa{\end{example}}
\def\bcor{\begin{corollary}}
\def\ncor{\end{corollary}}
\newcommand{\ee}{\mathbb{E}}
\newcommand{\rr}{\mathbb{R}}
\newcommand{\pp}{\mathbb{P}}
\newcommand{\qq}{\mathbb{Q}}
\newcommand{\Lip}{{\mathrm{{\rm Lip}}}}
\def\e{\varepsilon}
\newcommand{\gcbl}[1]{\mathrm{GCB}\!\left(#1\right)}
\title[Talagrand's transportation inequality  for SPDEs with monotone drifts]{Talagrand's transportation inequality  for SPDEs with locally monotone drifts}
\author{Ruinan Li}
\address{Ruinan Li \\School of Statistics and Information, Shanghai University of International Business and Economics, Shanghai 201620,  PR China.}
\email{ruinanli@amss.ac.cn}
\author{Xinyu Wang}
\address{Xinyu Wang  \\Wenlan School of Business, Zhongnan University of Economics and Law, Wuhan 430073, P. R. China}
\email[Corresponding author]{wang\_xin\_yu@zuel.edu.cn}
\date{}
\begin{document}
\maketitle

\noindent{\bf Abstract}
The purpose of this paper is twofold. Firstly, we prove transportation inequalities ${\bf T_2}(C)$  on the space of continuous paths with respect to the uniform metric for the law of the solution to a class of non-linear monotone stochastic partial differential equations (SPDEs) driven by the Wiener noise. Furthermore, we also establish the  ${\bf T_1}(C)$ property for such SPDEs  but with merely locally monotone  coefficients,  including the stochastic Burgers type equation and stochastic $2$-D Navier-Stokes equation.

\vskip0.3cm
\noindent {\bf Keywords}{ Stochastic  partial differential equation, Transportation inequality, Locally monotone, Girsanov transformation.}

\vskip0.3cm

\noindent {\bf Mathematics Subject Classification (2010)}{ 60E15; 60H15.}
\maketitle


\section{Introduction}

Let $(E,d)$ be a metric space with Borel $\sigma$-algebra $\mathcal{B}(E)$, and $\mathcal M(E)$ be the space of all Borel probability measures on $E$. We say that $\mu\in \mathcal{M}(E)$ has normal concentration on $(E,d)$ if there are constants $C$, $r>0$ such that for every $\e>0$ and every Borel subset $A\in \mathcal{B}(E)$ with $\mu(A)\geq\frac{1}{2}$,
\beq \label{constration}
\mu(A_{\e})\geq 1-Ce^{-r\e^2},
\nneq
where $A_{\e}$ is the $\e$-neighborhood of $A$, i.e., $$A_{\e}=\{y:d(x,y)<\e,\text{ for some }  x\in A \}.$$
The concentration of measure phenomenon has wide applications, e.g. to stochastic
finance (see \cite{La}), statistics (see \cite{M}) and the analysis of randomized algorithms (see \cite{DP}).
In the past decades, many people established normal concentration properties for various kinds of  measures.
It is well known that one way to prove \eqref{constration} is to show a stronger statement, called {\it Talagrand inequality}, or
{\it transportation cost information inequality} (TCI inequality). These are inequalities that compare the Wasserstein distance with the relative entropy.
Let us recall some relevant concepts.
Fix a real number $p\ge1$, the Wasserstein distance of order $p$ between two probability measures $\mu,\nu\in \mathcal M(E)$ is defined as
\[
 W_{p}(\mu, \nu):=\inf_{\pi}\left[\int_{E}\int_E d(x,y)^p \, \pi(dx,dy) \right]^{\frac1p},
\]
where the infimum is taken over all  the probability measures  $\pi$  on $E\times E$ with marginal distributions $\mu$ and $\nu$
(saying couplings of $\mu, \nu$).  The relative entropy (or the Kullback information) $H(\nu|\mu)$ of $\nu$ with respect to (w.r.t. for short) $\mu$ is defined as
\begin{equation}\label{rate function1}
 H(\nu|\mu):=\left\{
       \begin{array}{ll}
         \int \log \frac{d\nu}{d\mu}d\nu,   & \text{if } \nu\ll \mu ;\\
        +\infty, & \hbox{\text{otherwise}.}
       \end{array}
     \right.
\end{equation}
\begin{definition}
The probability measure $\mu$ is said to satisfy the $T_p$ transportation cost information
inequality (also called the Talagrand inequality) on $(E, d)$ if there exists a constant $C>0$ such that for any probability measure $\nu$ on $E$,
\begin{equation}\label{TCI1}
W_{p}(\mu,\nu)\le \sqrt{2C  H(\nu|\mu)}.
\end{equation}
\end{definition}
As usual, we write $\mu\in {\bf T_{p}}(C)$ for this relation.
The properties ${\bf T_1}(C)$ and ${\bf T_2}(C)$ are particularly interesting.
That ${\bf T_1}(C)$ is related to the phenomenon of the measure concentration was emphasized by Marton
\cite{Marton96, Marton97}, Talagrand \cite{Tal0, Tal1, Tal2}, Bobkov and G\"{o}tze \cite{BG} and amply explored by Ledoux \cite{Ledoux01,Ledoux03}.
We refer the readers to the monograph \cite{BH2000,BLM, Ledoux01, Tal1} for nice expositions of the concentration of measure phenomenon.
The ${\bf T_2}(C)$, referred to as the quadratic transportation cost inequality, was first established by Talagrand \cite{Tal1} for the Gaussian measure with the sharp constant $C = 2$. It has been brought into relation with the log-Sobolev inequality, Poincar\'{e} inequality, inf-convolution, Hamilton-Jacobi equations by Otto and Villani \cite{OV, Villani} and Bobkov, Gentil and Ledoux \cite{BGL2001}. According to H\"{o}lder's inequality, if we increase $p$ in \eqref{TCI1}, this inequality becomes stronger. Hence $\mu\in {\bf T_{2}}(C)$ implies $\mu\in {\bf T_{1}}(C)$.

For several years, the problem of  transportation cost information to stochastic (partial) differential equations has been widely studied  and is still a very active research area from both a theoretical and an applied point of view, see \cite{DP, Goz, La} and references therein.  The TCIs for stochastic differential equations were obtained by H. Djellout, A. Guillin and L. Wu in \cite{DGW}.
L. Wu and Z. Zhang \cite{WZ2006} studied the ${\bf T_2}(C)$  for SPDEs w.r.t. the $L^2$-norm by Galerkin's approximation. By Girsanov's transformation, B. Boufoussi and S. Hajji \cite{BH} obtained the ${\bf T_2}(C)$ w.r.t. the $L^2$-metric for the stochastic heat equations driven by the space-time white noise and driven by the fractional-white noise.  Khoshnevisan and  Sarantsev  \cite{KS} established  the ${\bf T_2}(C)$ for more general SPDEs driven by the space-time white noise under the uniform and $L^2$-norm. S. Shang and T. Zhang  \cite{SZ}  established the  ${\bf T_2}(C)$ w.r.t. the uniform metric for the stochastic heat equation driven by the multiplicative space-time white noise,  which was extended to the time-white and space-colored noise  case by S. Shang  and R. Wang  \cite{SW}. Y. Dai and R. Li \cite{DL} proved the ${\bf T_2}(C)$ for stochastic heat equation with rough dependence in space w.r.t. the weighted   $L^2$-norm. F.-Y. Wang and T. Zhang  \cite{WZ} studied the   ${\bf T_2}(C)$ for SPDEs with random initial values.   Y. Li and X. Wang \cite{LW1}  established the   ${\bf T_2}(C)$ w.r.t. the weighted   $L^2$-norm  for stochastic wave equation on   $\mathbb R^3$.  Y. Ma and  R. Wang \cite{MW} studied the ${\bf T_1}(C)$ for stochastic reaction-diffusion equations with L\'evy noises.

The aim of this paper is to prove that the quadratic transportation cost inequality holds to a class of non-linear monotone SPDEs under the uniform distance  and the ${\bf T_1}(C)$ property holds under the $L^2$-metric to a class of non-linear locally monotone SPDEs. After we finished this article, we found a closely related article \cite{KA} which proved a ${\bf T_2}(C)$  for the solution of evolutionary $p$-Laplace equation w.r.t. the $L^2$-metric. However, the paper does not contain a value of the constant $C$.

The rest of the paper is organized as follows. In Sect. 2, we establish the quadratic transportation cost inequality ${\bf T_2}(C)$  for the law of the solution to the SPDEs  with monotone drifts. In Sect. 3, we prove the ${\bf T_1}(C)$ for  the SPDEs  with only locally monotone drifts, with  the stochastic Burgers type equation and stochastic $2$-D Navier-Stokes equation provided as examples.

\section{Quadratic transportation cost inequalities for  SPDEs  with monotone drifts}

\subsection{SPDEs with monotone drifts}

Let $(H, \langle \cdot,\cdot\rangle_H, \|\cdot\|_H)$ be a separable Hilbert space and  $(V,\|\cdot\|_V)$  a Banach space such that $V\subset H$ continuously and densely. Let $V^*$ be the dual space of $V$, it is well known
$$
V\subset H\subset V^*
$$
continuously and densely. If $_{V^*}\langle\cdot, \cdot\rangle_{V}$ denotes the dualization between $V^*$ and $V$, it follows that
$$
_{V^*}\langle z, v\rangle_{V}=
\langle z, v\rangle_{H}\ \ \ \ \text{for all } z\in H, v\in V.
$$
$(V, H, V^*)$ is called a {\it Gelfand triple}.
In this paper, we always assume that $V$ is compactly embedded in $H$.
Thus, there exists a constant $\eta > 0$ such that
\begin{equation}\label{norm-rel}
\|x\|_V\geq \eta \|x\|_H \ \ \text{for all }x\in V.
\end{equation}

 Let  $\{W_t\}_{t\ge0}$ be a cylindrical Wiener process  on a  separable Hilbert space $(U,\langle \cdot, \cdot\rangle_U)$  w.r.t.  a  complete probability space $(\Omega, \mathcal F, \{\mathcal F_t\}_{t\geq 0}, \pp)$  and $(\mathcal L_2(U;H), \|\cdot\|_{\mathcal L_2(U;H)})$ be the space of all Hilbert-Schmidt operators from $U$ to $H$. Now we consider the following stochastic evolution equation
\begin{equation}\label{eq SPDE}
d X_t=A(t, X_t)d t+B(t, X_t)d W_t, \ \ \  X_0=x\in H,
\end{equation}
where
$$
A: [0,T]\times V\rightarrow V^*; \ \ \  B:[0,T]\times V\rightarrow \mathcal L_2(U; H).
$$

Assume that  there exist constants $\alpha>1$, $\theta>0$, $K_2, K_3\in \mathbb R$, $K_4>0$
and a positive adapted process $f\in L^1([0,T]\times \Omega; dt\times \pp)$ such that the following conditions hold
for all  $v, v_1,v_2\in V$ and $t\in[0,T]$:
 \begin{itemize}
  \item[{\bf (H1)}]  Hemicontinuity of $A$:  the map
  $$\lambda\mapsto   {_{V^*}\langle }A(t, v_1+\lambda v_2), v\rangle_{V}$$
  is continuous on $\mathbb R$.
  \item[{\bf (H2)}]  Monotonicity of $(A,B)$:
  $$
  2 _{V^*}\langle A(t, v_1)-A(t, v_2), v_1-v_2\rangle_{V}+\|B(t, v_1)-B(t, v_2)\|_{\mathcal L_2(U;H)}^2\le K_2 \|v_1-v_2\|_H^2.
  $$
  \item[{\bf (H3)}] Coercivity of $(A,B)$:
  $$
  2 _{V^*}\langle A(t, v), v\rangle_{V}+\|B(t, v)\|_{\mathcal  L_2(U;H)}^2\le f_t -\theta \|v\|_V^{\alpha}+  K_3\|v\|_H^2.
  $$
  \item[{\bf (H4)}] Boundedness of $A$:
  $$
  \|A(t,  v)\|_{V*}\le  f_t^{(\alpha-1)/\alpha}+K_4 \|v\|_V^{\alpha-1}.
  $$
  \end{itemize}

According to \cite{KR}, we have the following result about the existence and uniqueness of  solutions to Eq.\eqref{eq SPDE}.
\begin{lemma}
Assume that $({\bf H1})$-$({\bf H4})$ hold. For any $X_0\in L^2(\Omega, \mathcal F_0, \mathbb P; H)$, \eqref{eq SPDE} admits a unique solution $X=\{X_t\}_{t\in[0,T]}$ in $C([0,T];H)\cap L^{\alpha}([0,T];V)$, which is an adapted continuous process on $H$ such that
 \begin{equation}\label{eq solution 1}
 \langle X_t, v\rangle_H=\langle X_0, v\rangle_H+ \int_0^t {_{V^*}}\langle A(s, X_s), v\rangle_Vds+\int_0^t \langle B(s,X_s)dW_s, v\rangle_H
 \end{equation}
 holds for all $v\in V$ and $(t,\omega)\in [0, T]\times \Omega$. Furthermore,
 \begin{equation}\label{eq solution 2}
 \mathbb E\left[\sup_{0\le t \le T} \|X_t\|_{H}^2+\int_0^T\|X_t\|_{V}^{\alpha}dt \right]<\infty.
 \end{equation}
\end{lemma}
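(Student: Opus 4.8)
The plan is to establish this classical existence-and-uniqueness result via the variational (Galerkin) method of Krylov and Rozovskii, which is exactly the framework referenced in \cite{KR}. First I would fix an orthonormal basis $\{e_i\}_{i\ge1}\subset V$ of $H$, set $H_n=\mathrm{span}\{e_1,\dots,e_n\}$, and let $P_n:H\to H_n$ be the orthogonal projection. Projecting \eqref{eq SPDE} onto $H_n$ yields a finite-dimensional Itô SDE for $X_t^n=\sum_{i=1}^n\langle X_t^n,e_i\rangle_H e_i$ with initial datum $P_nX_0$, whose coefficients are continuous by the hemicontinuity {\bf (H1)} and satisfy a one-sided growth bound inherited from the monotonicity {\bf (H2)} and coercivity {\bf (H3)}. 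This guarantees a unique global strong solution $X^n$ of the projected system.

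Next I would derive a priori estimates uniform in $n$. Applying Itô's formula to $\|X_t^n\|_H^2$, invoking {\bf (H3)}, and combining the Burkholder--Davis--Gundy and Gronwall inequalities gives
\[
\ee\left[\sup_{0\le t\le T}\|X_t^n\|_H^2+\int_0^T\|X_t^n\|_V^\alpha\,dt\right]\le C,
\]
with $C$ independent of $n$. The boundedness {\bf (H4)} then bounds $A(\cdot,X^n)$ in $L^{\alpha/(\alpha-1)}([0,T]\times\Omega;V^*)$, while {\bf (H3)} bounds $B(\cdot,X^n)$ in $L^2([0,T]\times\Omega;\mathcal L_2(U;H))$. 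By reflexivity and weak compactness I may pass to a subsequence along which $X^n\rightharpoonup X$ in $L^\alpha([0,T]\times\Omega;V)$, $A(\cdot,X^n)\rightharpoonup Y$, $B(\cdot,X^n)\rightharpoonup Z$, and $X_T^n\rightharpoonup\xi$ in $L^2(\Omega;H)$. Passing to the limit in the projected equations shows that $X_t=X_0+\int_0^t Y_s\,ds+\int_0^t Z_s\,dW_s$ with $\xi=X_T$, so $X$ admits a continuous $H$-valued version and lies in $C([0,T];H)\cap L^\alpha([0,T];V)$.

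The crux of the argument, and the step I expect to be the main obstacle, is to identify the weak limits, namely $Y=A(\cdot,X)$ and $Z=B(\cdot,X)$, since weak convergence of $X^n$ does not commute with the nonlinear maps $A$ and $B$. Here I would use the Minty--Browder monotonicity trick: apply Itô's formula to $e^{-K_2 t}\|X_t\|_H^2$, compare it against the weak-lower-semicontinuous limit of $e^{-K_2 t}\|X_t^n\|_H^2$, and exploit {\bf (H2)} to obtain, for every bounded $\phi\in L^\alpha([0,T]\times\Omega;V)$,
\[
\ee\int_0^T e^{-K_2 t}\Big[2\,{_{V^*}}\langle Y_t-A(t,\phi_t),X_t-\phi_t\rangle_V+\|Z_t-B(t,\phi_t)\|_{\mathcal L_2(U;H)}^2\Big]\,dt\le0.
\]
Choosing $\phi=X-\lambda\,\psi\,v$ for arbitrary scalar test functions $\psi$ and $v\in V$, dividing by $\lambda>0$, letting $\lambda\to0$, and invoking the hemicontinuity {\bf (H1)} forces $Y=A(\cdot,X)$ and $Z=B(\cdot,X)$. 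Hence $X$ solves \eqref{eq SPDE} in the sense of \eqref{eq solution 1}.

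Finally, uniqueness follows directly from monotonicity: for two solutions $X,X'$, Itô's formula applied to $\|X_t-X_t'\|_H^2$ together with {\bf (H2)} yields
\[
\frac{d}{dt}\ee\|X_t-X_t'\|_H^2\le K_2\,\ee\|X_t-X_t'\|_H^2,
\]
so Gronwall's lemma gives $X=X'$. The energy estimate \eqref{eq solution 2} is then inherited from the uniform Galerkin bound above by weak lower semicontinuity of the $H$- and $V$-norms, which completes the proof.
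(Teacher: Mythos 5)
Your proposal is correct and follows exactly the route the paper relies on: the paper does not prove this lemma but cites Krylov--Rozovskii \cite{KR}, whose argument is precisely the Galerkin approximation, uniform a priori estimates via It\^o's formula and {\bf (H3)}, weak compactness, and the Minty--Browder monotonicity trick with the $e^{-K_2 t}$ weight to identify the limits, followed by uniqueness from {\bf (H2)} and Gronwall. Your reconstruction matches that standard variational proof, so there is nothing to add.
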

The formulation of  \eqref{eq SPDE}  includes a lot of interesting examples
of SPDEs, such as stochastic $p$-Laplace equation (\cite{Liu}, \cite{PR}),
stochastic generalized porous media equation (\cite{DRR}, \cite{RRW}) and stochastic fast-diffusion equation (\cite{LW}, \cite{RRW}).
The reader can refer to the related references listed above for more details.

\subsection{${\bf T_2}(C)$   for  SPDEs  with monotone drifts }

Let $\{X_t\}_{t\in[0,T]}$ be the unique solution of equation \eqref{eq SPDE}.
For any  $\mu\in \mathcal M(H)$, let $P^{\mu}$ be the distribution of the solution  $\{X_t\}_{t\in[0,T]}$ on $C([0,T];H)$ such that the law of $X_0$ is $\mu$.  Particularly, if $\mu=\delta_{x}$ for some $x\in H$, we write $P^{x}:=P^{\delta_{x}}$ for short.

In order to investigate the  quadratic transportation inequality for SPDEs  with monotone drifts, we further assume that the following condition hold.
 \begin{itemize}
 \item[{\bf (H5)}] There exists a positive constant $K_T$ such that
 $$
 C_B:=\sup_{t\in[0,T]}\sup_{u\in H} \|B(t, u)\|^2_{\mathcal  L_2(U;H)}\le  K_{T}.
 $$

 \end{itemize}

Here is the main result of this section.
\bthm\label{thm transport 1}
Under  $({\bf H1})$-$({\bf H5})$, for any deterministic initial value $x\in H$,  the law $P^{x}$ satisfies the $T_2$-transportation inequality on the space $C([0,T];H)$  w.r.t. the uniform  metric with the constant $C(T, K_2, C_B)$ defined by
\begin{equation}\label{Costa1}
C(T, K_2, C_B):=\inf_{0<\varepsilon_1+\varepsilon_2<1}\frac{C_B}{\varepsilon_1(1-\varepsilon_1-\varepsilon_2)}
e^{\frac{\left(\varepsilon_2+C_1^2\right)K_2T}{(1-\varepsilon_1-\varepsilon_2)\varepsilon_2}}.
\end{equation}
\nthm

We will apply the Girsanov theorem to prove Theorem \ref{thm transport 1}.
First we state a lemma  describing all probability measures $Q$ that are absolutely continuous w.r.t. $P^{x}$.
It is analogous to \cite[Theorem 5.6]{DGW} in the setting of finite-dimensional Brownian motion,
\cite[Lemma 3.1]{KS} in the setting of the space-time white noise on $\rr^2$ and \cite[Lemma 3.1]{SW} in the setting of Gaussian noise white in
time and colored in space, hence we omit its proof.

Let $Q$ be a probability measure on $C([0,T];H)$ such that $Q\ll P^{x}$.
Define a new probability measure $\mathbb Q$ on the filtered probability space $(\Omega,\mathcal{F}, \{\mathcal{F}\}_{0\leq t\leq T}, \mathbb{P})$ by
\begin{equation}\label{eq Q}
  d\mathbb Q:=\frac{dQ}{d P^{x}}(u)d\mathbb P,
\end{equation}
and let $\ee^{\mathbb Q}$ denotes the expectation w.r.t. this new measure.
Denote the Radon-Nikodym derivative restricted on $\mathcal F_t$ by
\begin{equation*}
M_t := \left. \frac{d\mathbb Q}{d\mathbb P}\right|_{\mathcal F_t},\quad t\in [0,T].
\end{equation*}
Then $\{M_t \}_{0\le t\le T}$ is a nonnegative $\mathbb P$-martingale.
\blem\label{lem Girsanov}
There exists an $U$-valued predictable process  $h=\{h(t), t\in[0,T] \}$ such that
\begin{align}
	\int_0^T \|h(s)\|_{U}^2ds<\infty, \quad \mathbb Q\text{-a.s.},
\end{align}
and the process
\begin{align}\label{190725.224510}
	\widetilde W_t := W_t -\int_0^t h(s)ds, \quad t\in[0,T],
\end{align}
is a cylindrical Wiener process on $U$ under  $\mathbb Q$. Moreover,
\beq\label{Mar}
M_t =\exp\left(\int_0^t h(s)dW_s-\frac{1}{2} \int_0^t\|h(s)\|_{U}^2ds\right)\ \quad \mathbb Q\text{-a.s.},
\nneq
and
\beq\label{eq entropy}
H(Q|P^{x})=\frac{1}{2} \ee^{\mathbb Q} \left[\int_0^T\|h(s)\|_{U}^2ds\right].
\nneq

\nlem
Now we are in the position to prove the main result of this section.
\begin{proof}[The proof of Theorem \ref{thm transport 1}]

For any $Q\ll P^{x}$ such that $ H(Q|P^{x})<\infty$, let $\mathbb Q$ be defined as (\ref{eq Q}) and $h$ be the corresponding process appeared in Lemma \ref{lem Girsanov}. Then it is easy to see that the solution of equation \eqref{eq SPDE} satisfies the following SPDE under the measure $\mathbb Q$:
\begin{equation}\label{eq SPDE1}
d X_t=A(t, X_t)d t+B(t, X_t)d\widetilde W_t+B(t, X_t) h(t)dt, \ \ \  X_0=x\in H.
\end{equation}
Consider the solution of the following equation:
\begin{equation}\label{eq SPDE2}
d Y_t=A(t, Y_t)d t+B(t, Y_t)d\widetilde W_t, \ \ \  Y_0=x\in H.
\end{equation}
 By Lemma \ref{lem Girsanov}, it follows that under the probability measure $\mathbb Q$, $(Y, X)$ forms a coupling of $(P^x, Q)$. Therefore, by the definition of the Wasserstein distance, we have
\beq\label{eq XY}
W_{2}^2(Q, P^{x}) \le \ee^{\mathbb Q}\left[\sup_{t\in [0,T]} \|X_t-Y_t\|_H^2 \right].
\nneq
In light of \eqref{eq entropy} and \eqref{eq XY},  to prove the $T_2$-transportation inequality, it is sufficient to show that
\beq\label{eq tar}
\ee^{\mathbb Q}\left[\sup_{t\in [0,T]} \|X_t-Y_t\|_H^2 \right]\le C \ee^{\qq}\left[\int_0^T\|h(s)\|_{U}^2ds\right],
\nneq
for some constant $C$ independent of $h$.

From \eqref{eq SPDE1} and \eqref{eq SPDE2}, by It\^o's formula and   ({\bf H2}) , we have
\begin{align}
\|X_t-Y_t\|_H^2=& 2 \int_0^t {_{V^*}}\left\langle A(s, X_s)-A(s, Y_s), X_s-Y_s\right \rangle_V ds\notag\\
&+ 2 \int_0^t \left\langle   X_s-Y_s,   \left[B(s, X_s)-B(s, Y_s)\right] d\widetilde W_s \right \rangle_H \notag\\
&+ 2 \int_0^t \left\langle   X_s-Y_s,   B(s, X_s) h(s) ds\right \rangle_H \notag+\int_0^t \left\|B(s,X_s)-B(s,Y_s) \right\|_{\mathcal  L_2(U;H)}^2ds\notag\\
\le & K_2\int_0^t \left\|X_s-Y_s \right\|_H^2ds +
2 \int_0^t \left\langle   X_s-Y_s,   \left[B(s, X_s)-B(s, Y_s)\right] d\widetilde W_s \right \rangle_H \notag\\
&+ 2 \int_0^t \left\langle   X_s-Y_s,  B(s, X_s) h(s) ds\right \rangle_H.
\end{align}
By the Burkhold-Davis-Gundy inequality,    ({\bf H2}),  ({\bf H5}) and the  inequality
 \begin{align}\label{eq elem}
 2ab\le \varepsilon a^2+\frac{1}{\varepsilon} b^2,  \ \ \ \ \forall a, b,\varepsilon >0,
 \end{align}
we have that for some positive constant $C_1$ and any $\varepsilon_1, \varepsilon_2>0$,
\begin{align}\label{eq XY2}
&\mathbb E^{\mathbb Q}\left[\sup_{s\in[0,t]}\|X_s-Y_s\|_H^2\right] \notag\\
\le & K_2\mathbb E^{\mathbb Q}\left[\int_0^t \left\|X_s-Y_s \right\|_H^2ds\right] +
2C_1 \mathbb E^{\mathbb Q}\left[\left(\int_0^t   \left\| B(s, X_s)-B(s, Y_s)\right\|_{\mathcal  L_2(U;H)}^2\left\|X_s-Y_s\right\|_H^2ds\right)^{\frac12} \right]\notag\\
&+ \varepsilon_1\mathbb E^{\mathbb Q}\left[\sup_{s\in[0,t]}\|X_s-Y_s\|_H^2 \right]+
\frac{\sup_{t\in [0,T], u\in H}\|B(t,u)\|_{\mathcal  L_2(U;H)}^2}{\varepsilon_1}\mathbb E^{\mathbb Q}\left[\int_0^T\|h(s)\|_{U}^2ds\right]\notag\\
\le & K_2\mathbb E^{\mathbb Q} \left[\int_0^t \left\|X_s-Y_s \right\|_H^2ds\right] +
2C_1 \sqrt{K_2} \mathbb E^{\mathbb Q}\left[\left( \int_0^t  \left\|X_s-Y_s\right\|_H^4ds\right)^{\frac12}\right] \notag\\
&+ \varepsilon_1\mathbb E^{\mathbb Q}\left[\sup_{s\in[0,t]}\|X_s-Y_s\|_H^2\right]  + \frac{C_B}{\varepsilon_1}\mathbb E^{\mathbb Q}\left[\int_0^T\|h(s)\|_{U}^2ds\right]\notag\\
\le & K_2\mathbb E^{\mathbb Q} \left[\int_0^t \left\|X_s-Y_s \right\|_H^2ds\right]
+2C_1 \sqrt{K_2}\mathbb E^{\mathbb Q}\left( \sup_{s\in[0,t]}\|X_s-Y_s\|_H \cdot \left[\int_0^t \left\|X_s-Y_s\right\|_H^2ds\right]^{\frac12}\right) \notag\\
&+ \varepsilon_1\mathbb E^{\mathbb Q}\left[\sup_{s\in[0,t]}\|X_s-Y_s\|_H^2\right] + \frac{C_B}{\varepsilon_1} \mathbb E^{\mathbb Q}\left[\int_0^T\|h(s)\|_{U}^2ds\right]\notag\\
\le &
\left(K_2+\frac{C_1^2 K_2}{\varepsilon_2}\right) \mathbb E^{\mathbb Q}  \left[\int_0^t \left\|X_s-Y_s\right\|_H^2ds\right]
+( \varepsilon_1+\varepsilon_2)\mathbb E^{\mathbb Q} \left[\sup_{s\in[0,t]}\|X_s-Y_s\|_H^2\right] + \frac{C_B}{\varepsilon_1} \mathbb E^{\mathbb Q}\left[\int_0^T\|h(s)\|_{U}^2ds\right].\notag\\
\end{align}
Choosing $\varepsilon_1, \varepsilon_2$ small enough such that  $\varepsilon_1+\varepsilon_2<1$, we have
$$
\mathbb E^{\mathbb Q}  \left[\sup_{s\in[0,t]}\|X_s-Y_s\|_H^2\right]\leq \frac{K_2+\frac{C_1^2 K_2}{\varepsilon_2}}{1-\varepsilon_1-\varepsilon_2}\mathbb E^{\mathbb Q} \left[\int_0^t \left\|X_s-Y_s\right\|_H^2ds\right]+ \frac{C_B}{\varepsilon_1(1-\varepsilon_1-\varepsilon_2)} \mathbb E^{\mathbb Q} \left[\int_0^t \|h(s)\|_{U}^2 ds\right].
$$
Using the Gronwall inequality, we obtain
\begin{align}
\mathbb E^{\mathbb Q} \left[\sup_{t\in[0,T]}\|X_t-Y_t\|_H^2\right] \le C(T, K_2, C_B) \mathbb E^{\mathbb Q}\left[\int_0^T\|h(s)\|_{U}^2ds\right],
\end{align}
where
\begin{equation}\label{Costa}
C(T, K_2, C_B)=\inf_{0<\varepsilon_1+\varepsilon_2<1}\frac{C_B}{\varepsilon_1(1-\varepsilon_1-\varepsilon_2)}
e^{\frac{\left(\varepsilon_2+C_1^2 \right)K_2T}{(1-\varepsilon_1-\varepsilon_2)\varepsilon_2}}<+\infty.
\end{equation}
\end{proof}


Applying  Theorem \ref{thm transport 1} and using the same approach as that in the proof of \cite[Theorem 3.1]{WZ}, we can get the following quadratic transportation cost inequality for the monotone  SPDE \eqref{eq SPDE} with random initial values, whose proof is omitted here.
\bcor
Let $\mu \in \mathcal M(H)$. Under  $({\bf H1})$-$({\bf H5})$,
\beq
W_{2}^2(Q, P^{\mu})\leq CH(Q|P^{\mu}),\ \ \ \ \forall Q \in \mathcal M(C([0,T];H))
\nneq
holds for some constant $C > 0$ if and only if
\beq
W_{2}^2(\nu, \mu)\leq c H(\nu|\mu),\ \ \ \ \forall \nu \in \mathcal M(H).
\nneq
holds for some constant $c > 0$.
\ncor

\section{ ${\bf T_1}(C)$  for  SPDEs  with local  monotone drifts and applications}

In this section, we will investigete the ${\bf T_1}$ property for Eq.\eqref{eq SPDE} under merely locally monotone coefficients.
This general framework is conceptually not more involved than the classical one, but includes many more fundamental examples not included previously.

\subsection{SPDEs with local monotone drifts}
Let us first state the new conditions on the coefficients of Eq.\eqref{eq SPDE}.
Instead of $({\bf H2})$ and $({\bf H4})$ in the previous section,  we assume that there exist constants $\alpha>1$, $\beta\geq 0$, $\tilde{K}_2$, $\tilde{K}_4\in \mathbb R$ and a positive adapted process $\tilde{f}\in L^1([0,T]\times \Omega; dt\times \pp)$ such that the following conditions hold for all  $v, v_1,v_2\in V$ and $t\in[0,T]$:
\begin{itemize}
  \item[({\bf H2'})]  Local monotonicity of $(A,B)$:
  \begin{align}
  &2 _{V^*}\langle A(t, v_1)-A(t, v_2), v_1-v_2\rangle_{V}+\|B(t, v_1)-B(t, v_2)\|_{\mathcal L_2(U;H)}^2\notag\\
  \le&  \left(\tilde{K}_2+\rho(v_2)\right)\|v_1-v_2\|_H^2,
  \end{align}
  where  $\rho: V\rightarrow [0,\infty)$ is a measurable function and locally bounded in $V$.
  \item[({\bf H4'})] Growth of $A$:
  $$
  \|A(t,  v)\|_{V*}^{\frac{\alpha}{\alpha-1}}\le  \left(\tilde{f}_t+\tilde{K}_4\|v\|_{V}^{\alpha}\right)\left(1+\|v\|_H^{\beta}\right).
  $$
  \end{itemize}

The existence and uniqueness of solutions to SPDE \eqref{eq SPDE} with locally monotone coefficients is due to the following result in \cite{LR2010}.

\blem\label{local solu}(\cite[Theorem 1.1]{LR2010})
Assume that ({\bf H1}), ({\bf H2'}), ({\bf H3}), ({\bf H4'}), ({\bf H5}) hold for $\tilde{f}\in L^{p/2}([0,T]\times \Omega; dt\times \pp)$ with some $p\geq \beta+2$ and there exists a constant $C$ such that
\beq\label{local p}
\rho(v)\leq C\left(1+\|v\|_{V}^{\alpha}\right)\left(1+\|v\|_H^{\beta}\right),\ \ v\in V.
\nneq
Then for any $X_0\in L^p(\Omega, \mathcal F_0, \mathbb P; H)$, Eq.\eqref{eq SPDE} has a unique solution $\{X_t\}_{t\in[0,T]}$ and satisfies

\begin{equation}\label{eq solution 3}
 \mathbb E\left[\sup_{0\le t \le T} \|X_t\|_{H}^p+\int_0^T\|X_t\|_{V}^{\alpha}dt \right]<\infty.
 \end{equation}
\nlem
This formulation  also includes a lot of interesting examples, such as stochastic reaction-diffusion
equations, stochastic Burgers type equation, stochastic $2$-D Navier-Stokes equation, stochastic $p$-Laplace
equation and stochastic porous media equation with non-monotone perturbations, see \cite{LR2010}.

\subsection{${\bf T_1(C)}$   for  SPDEs  with local monotone drifts}
In order to investigate the $T_1$-transportation inequality for SPDEs  with local monotone drifts, we focus on the case of $\alpha=2$ in ({\bf H3}), ({\bf H4'}) and \eqref{local p}. We also assume that
\beq\label{cons-con}
\theta \eta-K_3>0,
\nneq
where $\eta$ is the constant in Eq.\eqref{norm-rel}, and $\theta$ and $K_3$ are the constants in {\bf (H3)}.

For any $\lambda_0>0$, let
$$
\mathcal M_{\lambda_0}:=\left\{\mu \in \mathcal M(H): 1\leq \int_H e^{\lambda_0 \|x\|_H^2}\mu(dx)<\infty\right\}.
$$

The main result of this section is the following theorem, which provides the ${\bf T_1}$  property of  $P^{\mu}$
w.r.t. the $L^{2}$-metric  in  $L^{2}([0,T];V)$.
\bthm\label{T1}
Assume that all the conditions in Lemma \ref{local solu} are satisfied with $\alpha=2$ and a deterministic function $\tilde{f}$. Furthermore, we assume that \eqref{cons-con} holds. Then for any  $0<c<1-\frac{K_3}{\theta\eta}$, when $\mu\in \mathcal M_{\lambda_0}$ with $\lambda_0\in\left(0,\frac{(1-c)\theta \eta-K_3}{2C_B}\right)$, the law $P^{\mu}$ satisfies the $T_1$-transportation inequality on the space $L^{2}([0,T];V)$  w.r.t. the $L^{2}$-metric   with the constant $C$ given by
\beq\label{T1const}
C=\frac{\exp\left(2\lambda_0\int_0^T \tilde{f}(s)ds+1\right)}{c\lambda_0 \theta \sqrt{\pi}} \left(\int_H e^{\lambda_0\|x\|_H^2}\mu(dx)\right)^2.
\nneq
\nthm
The proof of this result is based on a general equivalence  due to \cite{BG} and is carried out in the next subsection.

\subsection{A general equivalence}
Let $(E,d)$  be a metric space. We call a function $F: E\rightarrow \mathbb R$ Lipschitz, and denote by $F\in\Lip(E;\mathbb R)$,
if
$$
\|F\|_{\mathrm Lip}: =\sup_{x\neq y}\frac{|F(x)-F(y)|}{d(x,y)}<+\infty.
$$

\begin{theorem}(Bobkov and G\"otze \cite{BG})\label{BG-T}
A probability measure $\mu$ satisfies the ${\bf T_1}(C)$ on  $(E,d)$ with constant $C>0$, if and only if for any $F\in\Lip(E;\mathbb R)$, $F$ is $\mu$-integrable and
\begin{equation}\label{eq T1 1}
\int e^{\lambda (F-\mu(F))}d\mu\le \exp\left(\frac{C}{2}\lambda^2\|F\|_{{\mathrm Lip}}^2 \right),  \ \ \ \forall  \lambda\in \mathbb R.
\end{equation}

\end{theorem}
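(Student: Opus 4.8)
The plan is to derive both implications from two classical dualities: the Kantorovich--Rubinstein representation of the $W_1$ distance and the variational (Donsker--Varadhan / Fenchel--Young) characterization of relative entropy. Concretely, I would use throughout that
\[
W_{1}(\mu,\nu)=\sup_{\|F\|_{\mathrm{Lip}}\le 1}\left(\int F\,d\nu-\int F\,d\mu\right),
\]
together with the entropy inequality $\int G\,d\nu\le H(\nu|\mu)+\log\int e^{G}\,d\mu$, valid for every probability measure $\nu$ and every $G$ with $e^{G}\in L^{1}(\mu)$.

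For the sufficiency direction I would fix $\nu$ with $H(\nu|\mu)<\infty$ and a test function $F$ with $\|F\|_{\mathrm{Lip}}\le 1$, apply the entropy inequality to $G=\lambda(F-\mu(F))$ with $\lambda>0$, and insert the assumed bound $\log\int e^{\lambda(F-\mu(F))}\,d\mu\le \tfrac{C}{2}\lambda^{2}$ to obtain
\[
\int F\,d\nu-\int F\,d\mu\le \frac{H(\nu|\mu)}{\lambda}+\frac{C\lambda}{2}.
\]
Optimizing in $\lambda$ (taking $\lambda=\sqrt{2H(\nu|\mu)/C}$) bounds the left side by $\sqrt{2C\,H(\nu|\mu)}$, and taking the supremum over $\|F\|_{\mathrm{Lip}}\le1$ together with Kantorovich--Rubinstein yields $W_{1}(\mu,\nu)\le\sqrt{2C\,H(\nu|\mu)}$, i.e.\ ${\bf T_1}(C)$.

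For the necessity direction I would fix a Lipschitz $F$, normalize to $\|F\|_{\mathrm{Lip}}=1$ and $\mu(F)=0$, and analyze the cumulant function $\Lambda(\lambda)=\log\int e^{\lambda F}\,d\mu$ through the tilted measures $d\nu_{\lambda}=e^{\lambda F}/(\int e^{\lambda F}d\mu)\,d\mu$. Differentiation under the integral gives $\Lambda'(\lambda)=\int F\,d\nu_{\lambda}$ and $H(\nu_{\lambda}|\mu)=\lambda\Lambda'(\lambda)-\Lambda(\lambda)$. Since $\|F\|_{\mathrm{Lip}}=1$ and $\mu(F)=0$, Kantorovich--Rubinstein gives $\Lambda'(\lambda)\le W_{1}(\mu,\nu_{\lambda})$, so ${\bf T_1}(C)$ produces the differential inequality $\Lambda'(\lambda)^{2}\le 2C\big(\lambda\Lambda'(\lambda)-\Lambda(\lambda)\big)$. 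Setting $\phi(\lambda)=\Lambda(\lambda)/\lambda$, which is nondecreasing because $\phi'(\lambda)=H(\nu_{\lambda}|\mu)/\lambda^{2}\ge0$, and using $\Lambda'=\phi+\lambda\phi'$, this rearranges to $\phi\le\lambda\sqrt{\phi'}\big(\sqrt{2C}-\sqrt{\phi'}\big)\le \tfrac{C}{2}\lambda$, whence $\Lambda(\lambda)=\lambda\phi(\lambda)\le\tfrac{C}{2}\lambda^{2}$ for $\lambda>0$; applying the same to $-F$ settles $\lambda<0$, and $\lambda=0$ is trivial.

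The main obstacle is not the algebra but the integrability and regularity needed to make the tilting argument legitimate: one must know that $F$ is $\mu$-integrable and that $\Lambda$ is finite and differentiable. I would handle this by first running the necessity argument on the bounded truncations $F_{n}=(F\wedge n)\vee(-n)$, which satisfy $\|F_{n}\|_{\mathrm{Lip}}\le\|F\|_{\mathrm{Lip}}$ and are automatically integrable, thereby obtaining $\int e^{\lambda(F_{n}-\mu(F_{n}))}\,d\mu\le e^{C\lambda^{2}/2}$ uniformly in $n$; passing to the limit via Fatou's lemma then simultaneously forces the exponential integrability of $F$, justifies the centering, and delivers the claimed bound. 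The convexity and sign facts ($\Lambda''\ge0$, $H(\nu_{\lambda}|\mu)\ge0$) and the differentiation under the integral sign are routine once finiteness of exponential moments is secured. I would also record that Kantorovich--Rubinstein duality is being invoked on $(E,d)$, which in every application in this paper is a Polish space, so the duality holds without extra hypotheses.
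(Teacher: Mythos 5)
The paper does not prove this statement at all: it is quoted verbatim from Bobkov and G\"otze \cite{BG} as a known characterization, so there is no in-paper proof to compare against; your proposal should be judged as a reconstruction of the classical result, and as such it is essentially correct. Your sufficiency half is exactly the standard argument (Donsker--Varadhan entropy inequality applied to $G=\lambda(F-\mu(F))$, optimize in $\lambda$, then Kantorovich--Rubinstein). For necessity, however, you take a heavier route than necessary: the tilting/differential-inequality argument with $\Lambda'(\lambda)^2\le 2C\big(\lambda\Lambda'(\lambda)-\Lambda(\lambda)\big)$ and $\phi=\Lambda/\lambda$ does close correctly (the sign facts $\Lambda'\ge0$, $\phi\ge0$ hold by Jensen since $\mu(F)=0$, and the pointwise maximization of $\sqrt{\phi'}\left(\sqrt{2C}-\sqrt{\phi'}\right)$ gives $C/2$), but the classical proof is a one-line dual computation that avoids all differentiability issues: by the variational formula, $\log\int e^{\lambda(F-\mu(F))}\,d\mu=\sup_{\nu}\left\{\lambda\big(\nu(F)-\mu(F)\big)-H(\nu|\mu)\right\}\le\sup_{\nu}\left\{\lambda\sqrt{2C\,H(\nu|\mu)}-H(\nu|\mu)\right\}\le\frac{C\lambda^2}{2}$, using ${\bf T_1}(C)$ with Kantorovich--Rubinstein and then $ab\le a^2/2+b^2/2$. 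Your route buys a more quantitative picture of how ${\bf T_1}$ constrains the cumulant generating function along the tilted family, at the cost of needing $\Lambda$ finite and differentiable, which is why you must truncate. One point in your truncation step deserves sharpening: Fatou alone does not ``force'' integrability of $F$; you must first rule out $\mu(F_n)\to\pm\infty$, e.g.\ by applying the uniform bound with $\lambda=-1$ to get $e^{\mu(F_n)}\int e^{-F_n}\,d\mu\le e^{C/2}$ and bounding $\int e^{-F_n}\,d\mu$ below by $e^{-M}\mu(F\le M)>0$ for suitable $M$ (and symmetrically with $\lambda=1$), after which boundedness of $\mu(F_n)$, dominated convergence for the centering, and Fatou for the final bound all go through. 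This is a routine but genuinely needed step that your sketch only gestures at.
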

According to Theorem \ref{BG-T}, to establish the ${\bf T_1}(C)$ property for a probability measure $\mu$ it is enough to verify Eq.\eqref{eq T1 1}, namely $\mu$ satisfies a Gaussian concentration bound defined  as follows.

\begin{definition}
Let $\mu$ be a probability measure on  $(E,d)$.
We say that $\mu$ satisfies a Gaussian concentration bound with constant $D>0$ on $(E,d)$ if there exists $x_0\in E$ such that $\int d(x_0,x) d\mu(x)<+\infty$  and for all $f\in\Lip(E;\mathbb R)$, one has
\[
\int e^{f-\mu(f)} d \mu \leq e^{D \|f\|_{\mathrm Lip}^2}.
\]
For brevity we shall say that $\mu$ satisfies $\gcbl{D}$ on $(E,d)$.
\end{definition}

The following result  providing an  equivalence between the Gaussian concentration and distance Gaussian moment bounds is  established in \cite{CCR}.
\begin{theorem}\label{thmgaussianexpmoment}(\cite[Theorem 2.1]{CCR})
Let $\mu$ be a probability measure on $(E,d)$.
Then $\mu$ satisifies a Gaussian concentration bound if and only if it has a Gaussian moment.
More precisely, we have the following:
\begin{enumerate}
\item
If $\mu$ satisfies $\gcbl{D}$, then there exists $x_0\in E$ such that
\begin{equation}\label{leiden1}
\int e^{\frac{d(x_0,x)^2}{16D}} d\mu(x) \leq 3 e^{\frac{\mu(d)^2}{D}}
\end{equation}
where $\mu(d):=\int d(x,x_0)d\mu(x)$.
\item
If there exist $x_0\in E$, $a>0$ and $b\geq 1$ such that
\begin{equation}\label{leiden2}
\int e^{ a d(x_0,x)^2} d\mu(x) \leq b,
\end{equation}
then $\mu$ satisfies $\gcbl{D}$ with
\begin{equation}\label{dformule}
D=\frac{b^2e}{2a\sqrt{\pi}}.
\end{equation}
\end{enumerate}
\end{theorem}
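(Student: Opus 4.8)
The plan is to prove the two implications separately, in each case transferring the problem to the single nonnegative real variable $R(x):=d(x_0,x)$ and exploiting two classical passages: between a sub-Gaussian Laplace transform and a square-exponential moment (for (1)), and between a square-exponential moment and the Gaussian concentration bound through a duplication argument (for (2)). Two elementary facts will be used repeatedly: $x\mapsto d(x_0,x)$ is $1$-Lipschitz, so $f=\lambda\,d(x_0,\cdot)$ is admissible with $\|f\|_{\mathrm{Lip}}=|\lambda|$; and $\mathbb E_g\,e^{s g^2}=(1-2s)^{-1/2}$ for a standard normal $g$ and $s<1/2$.

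For (1), assume $\mu$ satisfies $\gcbl{D}$. First I would insert $f=\lambda\,d(x_0,\cdot)$ into the defining inequality to obtain the sub-Gaussian estimate $\int e^{\lambda(R-m)}\,d\mu\le e^{D\lambda^2}$ for all $\lambda\in\mathbb R$, where $m:=\mu(d)$. I would then linearize the square via $e^{s z^2}=\mathbb E_g\,e^{\sqrt{2s}\,z\,g}$: by Fubini and the sub-Gaussian bound, $\int e^{s(R-m)^2}\,d\mu\le\mathbb E_g\,e^{2sD\,g^2}=(1-4sD)^{-1/2}$ whenever $4sD<1$. Taking $s=\tfrac1{8D}$ (so $4sD=\tfrac12$) and using $R^2\le 2(R-m)^2+2m^2$, which turns $e^{R^2/(16D)}$ into $e^{m^2/(8D)}e^{s(R-m)^2}$, gives $\int e^{R^2/(16D)}\,d\mu\le\sqrt2\,e^{m^2/(8D)}\le 3\,e^{m^2/D}$, i.e. exactly \eqref{leiden1}.

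For (2), assume \eqref{leiden2} and fix $f\in\Lip(E;\mathbb R)$ with $L:=\|f\|_{\mathrm{Lip}}$. Jensen's inequality for the centering constant together with a symmetrization over two independent copies $X,Y\sim\mu$ gives $\int e^{f-\mu(f)}\,d\mu\le\iint e^{f(x)-f(y)}\,d\mu\,d\mu=\iint\cosh(f(x)-f(y))\,d\mu\,d\mu\le\mathbb E\,\cosh(L\,d(X,Y))$, using that $\cosh$ is even and nondecreasing on $[0,\infty)$ and $|f(x)-f(y)|\le L\,d(x,y)$. I would then split on the size of $L$. For $L^2\le a$, the bound $\cosh u\le e^{u^2/2}$, the inequality $d(X,Y)^2\le 2R(X)^2+2R(Y)^2$, and Jensen (concavity of $t\mapsto t^{L^2/a}$) give $\int e^{f-\mu(f)}\,d\mu\le(\int e^{L^2R^2}\,d\mu)^2\le b^{2L^2/a}=e^{(2\log b/a)L^2}$. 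For $L^2>a$, I would use $\cosh u\le\tfrac12 e^{|u|}+\tfrac12$, the triangle inequality $d(X,Y)\le R(X)+R(Y)$, and $Lr\le ar^2+L^2/(4a)$ to obtain $\mathbb E\,\cosh(L\,d(X,Y))\le b^2 e^{L^2/(2a)}$, which is $\le e^{DL^2}$ on this range because $2\log b/L^2<2\log b/a$. The two estimates combine into $\gcbl{D}$ with, e.g., $D=\tfrac{1+4\log b}{2a}$, a quantity dominated by \eqref{dformule}; the precise value \eqref{dformule}, with its $\sqrt\pi$, instead emerges if one linearizes in one stroke through $e^{\lambda u^2}=(4\pi\lambda)^{-1/2}\int_{\mathbb R}e^{-t^2/(4\lambda)+tu}\,dt$.

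The hard part will be (2), concretely the large-Lipschitz regime. The naive one-shot bound $\mathbb E\,e^{L\,d(X,Y)}\le b^2 e^{L^2/(2a)}$ carries a factor $b^2\ge 1$ that does not tend to $1$ as $L\to0$, so on its own it cannot have the pure Gaussian form $e^{DL^2}$ demanded by $\gcbl{D}$. What repairs this is the symmetrization to $\cosh$ paired with the regime split: for small $L$ the quadratic bound $\cosh u\le e^{u^2/2}$ preserves the centering and drives the prefactor to $1$ (through the concavity/Jensen step, giving $b^{2L^2/a}\to1$), while the crude linear estimate is acceptable only once $L^2>a$, where the residual constant is swallowed by $e^{DL^2}$. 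Checking that the two bounds patch together into a single constant no larger than \eqref{dformule} is the only genuinely delicate bookkeeping, everything else being the two standard Gaussian computations above.
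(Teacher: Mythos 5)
Your proposal is correct, but there is a preliminary point: the paper contains no proof of this statement at all --- it is quoted verbatim from \cite[Theorem 2.1]{CCR} --- so your argument can only be compared with that reference. For part (1) you follow essentially the standard route: plugging $f=\lambda d(x_0,\cdot)$ into $\gcbl{D}$ gives the sub-Gaussian bound $\int e^{\lambda(d(x_0,\cdot)-\mu(d))}d\mu\le e^{D\lambda^2}$ for all $\lambda$, the Gaussian linearization $e^{sz^2}=\mathbb E_g e^{\sqrt{2s}\,zg}$ together with Tonelli yields $\int e^{(d(x_0,\cdot)-\mu(d))^2/(8D)}d\mu\le\sqrt2$, and $d^2\le 2(d-\mu(d))^2+2\mu(d)^2$ then gives \eqref{leiden1} with room to spare, since $\sqrt2\,e^{\mu(d)^2/(8D)}\le 3e^{\mu(d)^2/D}$; all the Fubini steps are legitimate because the integrands are nonnegative. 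For part (2) you genuinely diverge from the reference: \cite{CCR} arrive at the closed-form constant \eqref{dformule} in a single computation (which is where the $\sqrt\pi$ and the factor $e$ come from), whereas you symmetrize to $\iint\cosh(f(x)-f(y))\,d\mu\,d\mu$ and split on $\|f\|_{\mathrm Lip}^2\lessgtr a$, using $\cosh u\le e^{u^2/2}$ plus Jensen (concavity of $t\mapsto t^{L^2/a}$ when $L^2\le a$) in the small-Lipschitz regime, and $\cosh u\le\tfrac12 e^{|u|}+\tfrac12$ plus $Lr\le ar^2+L^2/(4a)$ in the large one. Your bookkeeping is right: the two regimes patch into $\gcbl{(1+4\log b)/(2a)}$, and since $\gcbl{D'}$ trivially implies $\gcbl{D}$ for any $D\ge D'$, the stated conclusion follows once one verifies your asserted domination $1+4\log b\le eb^2/\sqrt\pi$ for all $b\ge1$ --- a step you claim but do not check; it does hold, since $b\mapsto eb^2/\sqrt\pi-1-4\log b$ is minimized at $b^2=2\sqrt\pi/e\approx1.30$, where its value is $1-2\log(2\sqrt\pi/e)\approx0.47>0$. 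So your route is not only valid but produces a slightly sharper constant than \eqref{dformule}; what the reference's one-stroke derivation buys is the explicit closed form without a case split, which is the form the present paper then feeds into the proof of Theorem \ref{T1}.
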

\vskip0.3cm

Firstly, we establish the following crucial exponential estimate for the solution.
\begin{lemma}\label{prop exp est}
Assume that ({\bf H1}), ({\bf H2'}), ({\bf H3}), ({\bf H4'}), ({\bf H5}) and \eqref{local p} hold with $\alpha=2$ and a deterministic function $\tilde{f}$. Furthermore, we assume that \eqref{cons-con} holds.
Then for any $0<c<1-\frac{K_3}{\theta\eta}$ and $\lambda_0\in\left(0,\frac{(1-c)\theta \eta-K_3}{2C_B+\theta \eta}\right)$, we have that for any $x \in H$
\begin{equation}\label{eq exp1}
\mathbb E^x\left[\exp\left( c\lambda_0\theta \int_0^t \|X_s\|_V^{2}d s \right)  \right]\le e^{\lambda_0 \int_0^t \tilde{f}(s)ds}\cdot e^{\lambda_0\|x\|_H^2}.
\end{equation}

\end{lemma}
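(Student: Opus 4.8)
The plan is to run the variational It\^o calculus on $\|X_t\|_H^2$ and then repackage the resulting energy inequality as an exponential supermartingale whose expectation bound is exactly \eqref{eq exp1}. Since the unique solution lies in $C([0,T];H)\cap L^2([0,T];V)$, the It\^o formula in the Gelfand triple applies to $\|X_t\|_H^2$, giving
\[
\|X_t\|_H^2=\|x\|_H^2+\int_0^t\Big(2{}_{V^*}\langle A(s,X_s),X_s\rangle_V+\|B(s,X_s)\|_{\mathcal L_2(U;H)}^2\Big)ds+M_t,
\]
where $M_t:=2\int_0^t\langle X_s,B(s,X_s)dW_s\rangle_H$ is a continuous local martingale. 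Coercivity ({\bf H3}) with $\alpha=2$ and the deterministic $\tilde f$ bounds the drift by $\tilde f_s-\theta\|X_s\|_V^2+K_3\|X_s\|_H^2$, while ({\bf H5}) controls the bracket via $\langle M\rangle_t\le 4C_B\int_0^t\|X_s\|_H^2ds$ (using $\|B^*(s,X_s)X_s\|_U^2\le C_B\|X_s\|_H^2$). The embedding bound \eqref{norm-rel}, in the form $\|X_s\|_H^2\le\eta^{-1}\|X_s\|_V^2$, then lets me trade every surviving $H$-norm for a $V$-norm, so that the only quadratic functional remaining in both the drift and the bracket is $\|X_s\|_V^2$, dominated by the coercive dissipation $-\theta$.

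Next I would introduce the candidate supermartingale
\[
Y_t:=\exp\Big(\lambda_0\|X_t\|_H^2+c\lambda_0\theta\int_0^t\|X_s\|_V^2ds-\lambda_0\int_0^t\tilde f(s)ds\Big),
\]
normalised so that the term $\tilde f$ cancels the $\tilde f_s$ produced by coercivity and so that $Y_0=e^{\lambda_0\|x\|_H^2}$. Applying It\^o's formula to $Y_t$ (the chain rule for $g(z)=e^{\lambda_0 z}$ against $z_t=\|X_t\|_H^2$, together with the inserted absolutely continuous terms) produces a drift equal to $Y_t$ times a scalar multiple of $\|X_s\|_V^2$; collecting the three competing contributions — the dissipation $-\lambda_0\theta$, the converted lower-order plus It\^o-correction terms $(\lambda_0 K_3+2\lambda_0^2C_B)\eta^{-1}$, and the inserted $+c\lambda_0\theta$ — this coefficient is nonpositive provided $\lambda_0$ satisfies the smallness bound stated in the lemma, which is where \eqref{cons-con} (so that $\theta\eta-K_3>0$) and the explicit cap on $\lambda_0$ enter. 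Hence $Y_t$ is a nonnegative local supermartingale; localising along a reducing sequence of stopping times, taking expectations, and passing to the limit by Fatou gives $\mathbb E^x[Y_t]\le Y_0=e^{\lambda_0\|x\|_H^2}$. Rewriting this and discarding the factor $e^{\lambda_0\|X_t\|_H^2}\ge 1$ yields precisely \eqref{eq exp1}.

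The main obstacle is the drift-budget bookkeeping in the second step: the single dissipative term $-\theta\|X_s\|_V^2$ from coercivity must simultaneously pay for the lower-order growth $K_3\|X_s\|_H^2$, for the quadratic-variation correction $2\lambda_0^2C_B\|X_s\|_H^2$ generated by exponentiating, and for the target integrand $c\lambda_0\theta\|X_s\|_V^2$ that I am forcing into the exponent; it is the interplay of \eqref{cons-con} with the upper bound on $\lambda_0$ that keeps this balance, and pinning down the admissible threshold for $\lambda_0$ is the only delicate computation. A secondary but genuine technical point is that $Y_t$ is a priori only a local supermartingale — indeed its integrability is essentially what we are trying to establish — so one cannot take expectations directly; the localisation-plus-Fatou device (equivalently, the supermartingale property of the stochastic exponential $\exp(\lambda_0 M_t-\tfrac12\lambda_0^2\langle M\rangle_t)$) is what makes the expectation inequality rigorous. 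Finally, it is essential that $\tilde f$ is deterministic, since this is exactly what allows $e^{\lambda_0\int_0^t\tilde f(s)ds}$ to be extracted as a constant prefactor rather than remaining inside the expectation.
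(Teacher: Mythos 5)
Your proposal is correct and follows essentially the same route as the paper: the authors also apply It\^o's formula to $\|X_t\|_H^2$, use ({\bf H3}) and ({\bf H5}), and form the exponential process $Z_t=e^{-\lambda_0\int_0^t\tilde f(s)ds}e^{\lambda_0(\|X_t\|_H^2+c\theta\int_0^t\|X_s\|_V^2ds)}$ (identical to your $Y_t$), showing its drift is nonpositive under the stated bound on $\lambda_0$ and concluding via Fatou that $\mathbb E^x[Z_t]\le Z_0$. The only difference is presentational — the paper splits the computation into two It\^o steps ($Y_t$ then $e^{\lambda_0 Y_t}$) whereas you exponentiate in one pass — and your remarks on localisation and on $\tilde f$ being deterministic are accurate refinements of points the paper leaves implicit.
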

\begin{proof}
For any $0<c<1-\frac{K_3}{\theta\eta}$, let
$$
Y_t:=\|X_t\|_H^2+c\theta\int_0^t\|X_s\|_V^{2}d s,
$$
where $\theta$ is the constant appeared in {\bf (H3)}.

By It\^o's formula and ({\bf H3}), we have
\begin{align*}
d Y_t   =& \left(2 _{V^*}\langle A(t, X_t), X_t\rangle_{V}+\|B(t,X_t)\|_{\mathcal L_2(U;H)}^2 \right)dt+2\langle X_t, B(t,X_t)d W_t\rangle_H+c\theta\|X_t\|_V^{2}d t \notag \\
   \le& \left(\tilde{f}_t-(1-c)\theta\|X_t\|_V^{2}+K_3\|X_t\|_H^2 \right)d t+2\langle X_t, B(t,X_t)d W_t\rangle_H.
 \end{align*}
Denoting by $d[Y,Y]_t$ the quadratic variation process of a semimartingale $Y$, we can also compute with It\^o's formula to get
\begin{align}\label{Yt}
 d e^{\lambda_0Y_t}   =& e^{\lambda_0Y_t} \left(\lambda_0 dY_t+\frac{\lambda_0^2}{2}d [Y,Y]_t\right)\notag\\
 \le & \lambda_0 e^{\lambda_0Y_t} \left(\tilde{f}_t-(1-c)\theta\|X_t\|_V^{2}+K_3\|X_t\|_H^2+2\lambda_0\|B (t,X_t)\|_{\mathcal L_2(U;H)}^2\cdot\|X_t\|_H^2 \right)d t\notag \notag \\
 &+ 2\lambda_0e^{\lambda_0Y_t}\langle X_t, B(t,X_t)d W_t\rangle_H.
 \end{align}

Let $Z_t:=e^{-\lambda_0 \int_0^t \tilde{f}(s)ds}e^{\lambda_0 Y_t}$.
In view of  \eqref{Yt}, ({\bf H5}) and It\^o's formula again,
\begin{align*}
d Z_t=&e^{-\lambda_0 \int_0^t \tilde{f}(s)ds}d e^{\lambda_0 Y_t}+e^{\lambda_0 Y_t}d  e^{-\lambda_0 \int_0^t \tilde{f}(s)ds}\\
\le & \lambda_0 Z_t\left( -(1-c)\theta\|X_t\|_V^{2}+K_3\|X_t\|_H^2+2\lambda_0 C_B\|X_t\|_H^2 \right)d t+2\lambda_0Z_t\langle X_t, B(t,X_t)dW_t\rangle_H.
\end{align*}
Thus for any $0<c<1-\frac{K_3}{\theta\eta}$, when $0<\lambda_0<\frac{(1-c)\theta \eta-K_3}{2C_B}$, \eqref{norm-rel} implies
\begin{align*}
d Z_t &\le \lambda_0 Z_t\left( -(1-c)\theta\eta+K_3+2\lambda_0 C_B\right)\|X_t\|_H^2d t+2\lambda_0Z_t\langle X_t, B(t,X_t)dW_t\rangle_H\\
&\le 2\lambda_0Z_t\langle X_t, B(t,X_t)dW_t\rangle_H.
\end{align*}
Since $Z_t\ge0$, we obtain by Fatou's lemma $\mathbb E^x[Z_t]\le \mathbb E^x[Z_0]$, which is stronger than \eqref{eq exp1}.

The proof is complete.
\end{proof}

\begin{proof}[The proof of Theorem \ref{T1}]
In view of Theorem \ref{BG-T} and \ref{thmgaussianexpmoment}, to prove the probability measure $P^\mu$ satisfies ${\bf T_1}(C)$ for any $\mu\in \mathcal M_{\lambda_0}$ with $0<\lambda_0<\frac{(1-c)\theta \eta-K_3}{2C_B}$ and $0<c<1-\frac{K_3}{\theta\eta}$, it is enough to prove that $P^\mu$ has a Gaussian moment bound, i.e. Eq.\eqref{leiden2} holds for some constant $a>0$, $b\geq1$ and $x_0 \in H$. By Lemma \ref{prop exp est}, we see that for any $x\in H$

\begin{equation}\label{Px}
\int e^{ a d(x_0,x)^2} dP^x \leq b,
\end{equation}
where $a=c\lambda_0\theta$, $b=e^{\lambda_0\int_0^T\tilde{f}(s)ds+\lambda_0\|x\|_H^2}$ and $x_0=0 \in V$.
 Thus for any $\mu\in \mathcal M_{\lambda_0}$ with $0<\lambda_0<\frac{(1-c)\theta \eta-K_3}{2C_B}$ and $0<c<1-\frac{K_3}{\theta\eta}$,
\begin{align*}\label{Pu}
\int e^{ a d(x_0,x)^2} dP^\mu&=\int_H \int e^{ a d(x_0,x)^2} dP^x \mu(dx)\\
&\leq \int_H b \mu(dx)=e^{\lambda_0\int_0^T\tilde{f}(s)ds} \int_H e^{\lambda_0\|x\|_H^2}\mu(dx):=\tilde{b}.
\end{align*}
Taking into account that  $\tilde{f}, \lambda_0$ are positive and $\mu\in \mathcal M_{\lambda_0}$, we have $\tilde{b}\in(1,+\infty)$.
Thus Theorem \ref{thmgaussianexpmoment} implies that $P^\mu \in {\bf T_1}(C)$ with
$$
C=\frac{{\tilde{b}}^2e}{a\sqrt{\pi}}=\frac{\exp\left(2\lambda_0\int_0^T \tilde{f}(s)ds+1\right)}{c\lambda_0 \theta \sqrt{\pi}} \left(\int_H e^{\lambda_0\|x\|_H^2}\mu(dx)\right)^2.
$$

The proof is complete.
\end{proof}

\subsection{Examples}
This general framework includes a large number of new applications (see \cite{LR2010}), here we give two of them: the stochastic Burgers type equation and the stochastic $2$-D Navier-Stokes equation.

\bexa[Stochastic Burgers equation \cite{LR2010}]

Let $\Lambda=[0,1]$ and $W^{1,2}_0(\Lambda)$ be the standard Sobolev space. We consider the following triple
$$
V :=W^{1,2}_0(\Lambda)\subseteq H :=L^2(\Lambda)\subseteq (W^{1,2}_0(\Lambda))^\ast,
$$
with $\|v\|_V:=\left(\int_0^1\left(\frac{\partial v}{\partial x} \right)^2 dx\right)^{1/2}$ for any $v\in V$,
and the classical stochastic Burgers equation
\beq\label{SBE}
dX_t=\left(\Delta X_t+X_t\frac{\partial X_t}{\partial x}\right)dt+B(X_t)dW_t,
\nneq
where $W_t$ is a Wiener process on $H$. By the Poincar\'{e} inequality we know that $\eta$ in \eqref{norm-rel} is $\sqrt{\pi^2-1}$ in this case.

According to \cite{LR2010}, if $B:V \to \mathcal L_2(H)$  is Lipschitz,  conditions {\bf (H1)}, {\bf (H2')}, {\bf (H3)}, {\bf(H4')} hold for Eq.\eqref{SBE} with $\alpha=2$, $\beta=2$, $\theta=\frac{3}{2}$ and $\rho(v)=\|v\|^4_{L^4(\Lambda)}$.
Following the estimate on $[0,1]$ (see \cite[Lemma 2.1]{MS})
\beq\label{norest}
\|v\|_{L^4(\Lambda)}^4\leq 4 \|v\|_{L^2(\Lambda)}^2\left\|\frac{\partial v}{\partial x} \right\|_{L^2(\Lambda)}^2,
\nneq
we see that condition \eqref{local p} in Lemma \ref{local solu} holds for equation  \eqref{SBE}, thus with the initial value properly chosen, Eq.\eqref{SBE} has a unique solution.

If $B$ further satisfies $({\bf H5})$, then we have $K_3=0$ and $\tilde{f}(t)= \sup_{u\in H} \|B(u)\|_{\mathcal L_2(H)}^{2}=C_B$ in {\bf (H3)}. Hence by Theorem \ref{T1}, for any $c\in(0,1)$, when $\mu\in \mathcal M_{\lambda_0}$ with $\lambda_0\in\left(0,\frac{3(1-c)\sqrt{\pi^2-1}}{4C_B}\right)$, the law $P^\mu$ of the solution satisfies the ${\bf T_1}(C)$ property on $L^2([0,T];V)$ w.r.t. the $L^2$-metric with the constant given by \eqref{T1const}.
\nexa

\bexa[Stochastic $2$-D Navier-Stokes equation]
Let $\Lambda=(0,1)\times(0,1)$ be the rectangle  in $\rr^2$. Define
$$
V=\{v\in W^{1,2}_0(\Lambda,\rr^2): \nabla \cdot v=0\ a.s.\ in\ \Lambda\},\ \ \|v\|_V:=\left(\int_{\Lambda}|\nabla v|^2 dx \right)^{1/2},
$$
and $H$ is the closure of  $V$ in the  norm $\|v\|_H:=\left(\int_{\Lambda}|v|^2dx\right)^{1/2}$. By the Poincar\'{e } inequality, it is easy to see that  $\eta$ in \eqref{norm-rel} is $\sqrt{2\pi^2-1}$ in this case.

Define the linear operator $P_H$ (Helmhotz-Hodge projection) and $A$ (Stokes operator with viscosity constant $\nu$) by
$$
P_H:L^2(\Lambda,\rr^2)\to H\ \text{orthogonal projection};
$$
$$
A: W^{2,2}(\Lambda,\rr^2)\cap V \to H, Au=\nu P_H\triangle u.
$$
Consider the Gelfand triple
$$
V\subseteq H\equiv H^\ast\subseteq V^\ast,
$$
and the stochastic $2$-D Navier-Stokes equation
\begin{equation}\label{NS-2}
dX_t=(AX_t+F(X_t)+f_t)dt+B(X_t)dW_t,
\end{equation}
where $f\in L^2([0,T];V^\ast)$ denotes some external force,
$$
F:H\times V\to H,\ \ F(u,v)=-P_H[(u\cdot \nabla)v],\ \ F(u)=F(u,u)
$$
and $W_t$ is a Wiener process on $H$.


According to \cite{LR2010}, if $B$ satisfies
$$
\left\|B\left(v_1\right)-B\left(v_2\right)\right\|_{\mathcal L_2(H)}^{2} \leq K\left(1+\left\|v_2\right\|_{L^{4}(\Lambda)}^{4}\right)\left\|v_1-v_2\right\|_{H}^{2}, \quad v_1, v_2 \in V,
$$
where $K$ is some constant, then {\bf (H1)}, {\bf (H2')}, {\bf (H3)}, {\bf(H4')} hold for Eq.\eqref{NS-2} with $\alpha=2$, $\beta=2$, $\theta=\nu$ and $\rho(v)=\|v\|^4_{L^4(\Lambda)}$.
Similarly, following the well-known estimate on $\rr^2$ (\cite[Lemma 2.1]{MS}),
$$
\|v\|_{L^4(\Lambda)}^4\leq 2 \|v\|_{L^2(\Lambda)}^2\|\nabla v\|_{L^2(\Lambda)}^2,
$$
we see that condition \eqref{local p} in Lemma \ref{local solu} holds for Eq.\eqref{NS-2}, thus with the initial value properly chosen, equation \eqref{NS-2} has a unique solution. If $B$ further satisfies $({\bf H5})$, we have $K_3=0$ and $\tilde{f}(t)= \frac{\|f_t\|_{V^\star}^2}{\nu}+C_B$ in {\bf (H3)}.
Hence by Theorem \ref{T1}, for any $c\in(0,1)$, when $\mu\in \mathcal M_{\lambda_0}$ with $\lambda_0\in\left(0,\frac{(1-c)\nu\sqrt{2\pi^2-1}}{2C_B}\right)$, the law $P^\mu$ of the solution to Eq.\eqref{NS-2}  satisfies the ${\bf T_1}(C)$ property on $L^2([0,T];V)$ w.r.t. the $L^2$-metric with the constant given by \eqref{T1const}.

\nexa
%
%
%


{\bf Acknowledgement.} Ruinan Li is supported by Shanghai Sailing Program (Grant No. 21YF1415300) and NNSFC (12101392).

\end{document}